\newcommand{\PP}{\mathbb{P}}
\newcommand{\RR}{\mathbb{R}}
\newcommand{\CC}{\mathbb{C}}
\newcommand{\ZZ}{\mathbb{Z}}
\newcommand{\HH}{\mathbb{H}}
\newcommand{\fg}{\mathfrak{g}}
\newcommand{\fl}{\mathfrak{l}}
\newcommand{\fc}{\mathfrak{c}}
\newcommand{\calO}{\mathcal{O}}
\newcommand{\calL}{\mathcal{L}}
\newcommand{\calM}{\mathcal{M}}
\newcommand{\Bun}{\calM}
\newcommand{\Higgs}{\calM_{\mathrm{Higgs}}}
\newcommand{\Rep}{\calM_{\mathrm{Rep}}}
\newcommand{\Conn}{\calM_{\mathrm{conn}}}
\newcommand{\Hod}{\calM_{\mathrm{Hod}}}
\newcommand{\MDH}{\calM_{\mathrm{DH}}}
\newcommand{\Xbar}{\overline{X}}
\newcommand{\id}{\mathrm{id}}
\newcommand{\pr}{\mathrm{pr}}
\newcommand{\regstab}{\mathrm{rs}}
\newcommand{\rmH}{\mathrm{H}}
\DeclareMathOperator{\ad}{ad}
\DeclareMathOperator{\At}{At}
\DeclareMathOperator{\degree}{degree}
\DeclareMathOperator{\rank}{rank}
\DeclareMathOperator{\Hom}{Hom}
\DeclareMathOperator{\Aut}{Aut}
\DeclareMathOperator{\SL}{SL}
\DeclareMathOperator{\Sym}{Sym}
\DeclareMathOperator{\Lie}{Lie}
\newcommand{\longto}[1][]{\stackrel{#1}{\longrightarrow}}
\newcommand{\longonto}{\relbar\joinrel\twoheadrightarrow}
\newcommand{\longhookrightarrow}[1][]
  {\stackrel{#1}{\lhook\joinrel\relbar\joinrel\rightarrow}}
\newcommand{\longhookleftarrow}[1][]
  {\stackrel{#1}{\leftarrow\joinrel\relbar\joinrel\rhook}}
\theoremstyle{plain}
\newtheorem{theorem}{Theorem}[section]
\newtheorem{proposition}[theorem]{Proposition}
\newtheorem{corollary}[theorem]{Corollary}
\theoremstyle{definition}
\theoremstyle{remark}
\begin{document}

\title[Deligne--Hitchin moduli space, II]{Torelli
theorem for the\\Deligne--Hitchin moduli space, II}

\thanks{This work was supported by the Spanish
Ministerio de Ciencia e Innovaci\'on (grant MTM2010-17389),
and by the German SFB 647: Raum - Zeit - Materie.}

\author[I. Biswas]{Indranil Biswas}

\address{School of Mathematics, Tata Institute of Fundamental
Research, Homi Bhabha Road, Bombay 400005, India}

\email{indranil@math.tifr.res.in}

\author[T. L. G\'omez]{Tom\'as L. G\'omez}

\address{Instituto de Ciencias Matem\'aticas (CSIC-UAM-UC3M-UCM),
Nicol\'as Cabrera 15, Campus Cantoblanco UAM, 28049 Madrid, Spain}

\email{tomas.gomez@icmat.es}

\author[N. Hoffmann]{Norbert Hoffmann}

\address{Freie Universit\"at Berlin, Institut f\"ur Mathematik, 
Arnimallee 3, 14195 Berlin, Germany}

\email{norbert.hoffmann@fu-berlin.de}

\subjclass[2000]{14D20, 14C34}

\keywords{Principal bundle, Deligne--Hitchin moduli space, Higgs
bundle, vector field}

\date{}

\begin{abstract}
Let $X$ and $X'$ be compact Riemann surfaces of genus at least three.
Let $G$ and $G'$ be nontrivial connected semisimple linear algebraic
groups over $\CC$. If some components $\MDH^d(X,G)$ and
$\MDH^{d'}(X',G')$ of the associated Deligne--Hitchin moduli
spaces are biholomorphic, then $X'$ is isomorphic to $X$ or
to the conjugate Riemann surface $\Xbar$.
\end{abstract}

\maketitle

\section{Introduction}

Let $X$ be a compact connected Riemann surface of genus
$g \geq 3$. Let $\Xbar$ denote the conjugate Riemann surface;
by definition, it consists of the real manifold underlying $X$
and the almost complex structure $J_{\Xbar} := -J_X$. Let $G$ be a
nontrivial connected semisimple linear algebraic group over $\CC$.
The topological types of holomorphic principal $G$--bundles $E$
over $X$ correspond to elements of $\pi_1(G)$. Let
$\Higgs^d( X, G)$ denote the moduli space of semistable Higgs
$G$--bundles $(E, \theta)$ over $X$ with $E$ of topological type
$d\,\in\, \pi_1(G)$.

The \emph{Deligne--Hitchin moduli space} \cite{Si3} is a complex
analytic space $\MDH^d( X, G)$ associated to $X$, $G$ and $d$.
It is the twistor space for the hyper-K\"ahler structure on
$\Higgs^d( X, G)$; see \cite[\S 9]{Hi2}. Deligne \cite{De} has
constructed it together with a surjective holomorphic map
\begin{equation*}
  \MDH^d( X, G) \longonto \CC\PP^1 = \CC \cup \{ \infty \}.
\end{equation*}
The inverse image of $\CC \subseteq \CC\PP^1$ is the moduli space
$\Hod^d( X, G)$ of holomorphic principal $G$--bundles over $X$
endowed with a $\lambda$--connection. In particular,
every fiber over $\CC^* \subset \CC\PP^1$ is isomorphic to the
moduli space of holomorphic $G$--connections over $X$. The fiber
over $0 \in \CC\PP^1$ is $\Higgs^d( X, G)$, and the fiber over
$\infty \in \CC\PP^1$ is $\Higgs^{-d}( \Xbar, G)$.

In this paper, we study the dependence of these moduli spaces
on $X$. Our main result, Theorem \ref{thm:MDH}, states
that the complex analytic space $\MDH^d( X, G)$ determines
the unordered pair $\{X, \Xbar\}$ up to isomorphism.
We also prove that $\Higgs^d( X, G)$ and $\Hod^d( X, G)$
each determine $X$ up to isomorphism;
see Theorem \ref{thm:higgs} and Theorem \ref{thm:hodge}.

The key technical result is Proposition \ref{prop:fixed_higgs},
which says the following: Let $Z$ be an irreducible component of
the fixed point locus for the natural $\CC^*$--action on a moduli
space $\Higgs^d( X, G)$ of Higgs $G$--bundles. Then,
  \begin{equation*}
    \dim Z \leq (g-1) \cdot \dim_{\mathbb C} G,
  \end{equation*}
  with equality holding only for $Z = \Bun^d( X, G)$.

In \cite{BGHL}, the case of $G = \SL( r, \CC)$ was considered.

\section{Some moduli spaces associated to a compact
Riemann surface} \label{sec:moduli}
Let $X$ be a compact connected Riemann surface of genus
$g \geq 3$. Let $G$ be a nontrivial connected semisimple linear
algebraic group defined over $\CC$, with Lie algebra $\fg$.

\subsection{Principal $G$--bundles}
We consider holomorphic principal $G$--bundles $E$ over $X$.
Recall that the topological type of $E$ is given by an element
$d \in \pi_1(G)$ \cite{Ra}; this is a finite abelian group. The
\emph{adjoint vector bundle} of $E$ is the holomorphic vector
bundle
\begin{equation*}
  \ad( E) := E \times^G \fg
\end{equation*}
over $X$, using the adjoint action of $G$ on $\fg$.
$E$ is called \emph{stable} (respectively, \emph{semistable}) if
\begin{equation} \label{eq:stability}
  \degree( \ad( E_P)) < 0 \quad (\text{respectively, } \leq 0)
\end{equation}
for every maximal parabolic subgroup $P \subsetneqq G$ and every
holomorphic reduction of structure group $E_P$ of $E$ to $P$; here
$\ad( E_P) \subset \ad( E)$ is the adjoint vector bundle of $E_P$.

Let $\Bun^d( X, G)$ denote the moduli space of semistable
holomorphic principal $G$--bundles $E$ over $X$ of topological
type $d \in \pi_1(G)$. It is known that $\Bun^d( X, G)$ is an
irreducible normal projective variety of dimension
$(g-1) \cdot \dim_{\CC} G$ over $\CC$.

\subsection{Higgs $G$--bundles}
The holomorphic cotangent bundle of $X$ will be denoted by $K_X$.

A \emph{Higgs $G$--bundle} over $X$ is a pair $( E, \theta)$
consisting of a holomorphic principal $G$--bundle $E$ over $X$
and a holomorphic section
\begin{equation*}
  \theta \in \rmH^0( X, \ad( E) \otimes K_X),
\end{equation*}
the so-called \emph{Higgs field} \cite{Hi1, Si1}.
The pair $(E, \theta)$ is called \emph{stable} (respectively,
\emph{semistable}) if the inequality \eqref{eq:stability} holds
for every holomorphic reduction of structure group $E_P$ of $E$
to a maximal parabolic subgroup $P \subsetneqq G$ such that
$\theta \in \rmH^0( X, \ad( E_P) \otimes K_X)$.

Let $\Higgs^d( X, G)$ denote the moduli space of semistable
Higgs $G$--bundles $( E, \theta)$ over $X$ such that $E$ is of
topological type $d \in \pi_1( G)$. It is known that
$\Higgs^d( X, G)$ is an irreducible normal quasiprojective
variety of dimension $2( g-1) \cdot \dim_{\CC} G$ over $\CC$
\cite{Si2}. We regard $\Bun^d( X, G)$ as a closed subvariety
of $\Higgs^d( X, G)$ by means of the embedding
\begin{equation*}
  \Bun^d( X, G) \,\longhookrightarrow\, \Higgs^d( X, G)\, ,
    \qquad E \,\longmapsto\, (E, 0)\, .
\end{equation*}
There is a natural algebraic symplectic structure
on $\Higgs^d( X, G)$; see \cite{Hi1, BR}.

\subsection{Representations of the surface group in $G$}
Fix a base point $x_0 \in X$. The fundamental group of $X$
admits a standard presentation
\begin{equation*}
  \pi_1( X, x_0) \,\cong\, \langle a_1, \ldots, a_g, b_1, \ldots, b_g
    | \prod_{i=1}^g a_i b_i a_i^{-1} b_i^{-1} = 1 \rangle
\end{equation*}
which we choose in such a way that it is compatible with the
orientation of $X$. We identify the fundamental group of $G$
with the kernel of the universal covering $\tilde{G} \longonto G$.
The \emph{type} $d \in \pi_1( G)$ of a homomorphism
$\rho: \pi_1( X, x_0) \longto G$ is defined by
\begin{equation*}
  d \,:=\, \prod_{i=1}^g \alpha_i \beta_i \alpha_i^{-1} \beta_i^{-1}
    \,\in\, \pi_1( G) \,\subset\, \tilde{G}
\end{equation*}
for any choice of lifts $\alpha_i, \beta_i \in \tilde{G}$ of
$\rho( a_i), \rho( b_i) \in G$. This is also the topological
type of the principal $G$--bundle $E_{\rho}$ over $X$ given by
$\rho$. The space $\Hom^d( \pi_1( X, x_0), G)$ of all
homomorphisms $\rho: \pi_1( X, x_0) \longto G$ of type
$d \in \pi_1( G)$ is an irreducible affine variety over
$\CC$, and $G$ acts on it by conjugation. The GIT quotient
\begin{equation*}
  \Rep^d( X, G) := \Hom^d( \pi_1( X, x_0), G)/\!\!/G
\end{equation*}
doesn't depend on $x_0$. It is an affine
variety of dimension $2( g-1) \cdot \dim_{\CC} G$
over $\CC$, which carries a natural symplectic form
\cite{AB, Go}. Its points represent equivalence
classes of completely reducible homomorphisms $\rho$.
There is a natural bijective map
\begin{equation*}
  \Rep^d( X, G) \longto \Higgs^d( X, G)
\end{equation*}
given by a variant of the Kobayashi--Hitchin correspondence
\cite{Si1}. This bijective map is not holomorphic.

\subsection{Holomorphic $G$--connections}
Let $p: E \longto X$ be a holomorphic principal $G$--bundle.
Because the vertical tangent space at every point of the
total space $E$ is canonically isomorphic to $\fg$, there is
a natural exact sequence
\begin{equation*}
  0 \longto E \times \fg \longto TE \longto[dp] p^* TX \longto 0
\end{equation*}
of $G$-equivariant holomorphic vector bundles over $E$.
Taking the $G$-invariant direct image under $p$, it follows that
the \emph{Atiyah bundle} for $E$
\begin{equation*}
  \At( E) := p_*( TE)^G \subset p_*( TE)
\end{equation*}
sits in a natural exact sequence of holomorphic vector bundles
\begin{equation} \label{eq:atiyah}
  0 \longto \ad( E) \longto \At( E) \longto[dp] TX \longto 0
\end{equation}
over $X$. This exact sequence is called the
\emph{Atiyah sequence}. A \emph{holomorphic connection} on $E$
is a splitting of the Atiyah sequence, or in other words
a holomorphic homomorphism
\begin{equation*}
  D: TX \longto \At( E)
\end{equation*}
such that $dp \circ D = \id_{TX}$. It always exists if $E$ is
semistable \cite{At, AzBi}. The curvature of $D$ is a holomorphic
2--form with values in $\ad( E)$, so $D$ is automatically flat.

A \emph{holomorphic $G$--connection} is a pair $( E, D)$ where
$E$ is a holomorphic principal $G$--bundle over $X$, and $D$ is
a holomorphic connection on $E$. Such a pair is automatically
semistable, because the degree of a flat vector bundle is zero.

Let $\Conn^d( X, G)$ denote the moduli space of holomorphic 
$G$--connections $( E, D)$ over $X$ such that $E$ is of
topological type $d \in \pi_1( G)$. It is known that
$\Conn^d( X, G)$ is an irreducible quasiprojective variety
of dimension $2( g-1) \cdot \dim_{\CC} G$ over $\CC$.

Sending each holomorphic $G$--connection to its monodromy
defines a map
\begin{equation} \label{eq:Riemann-Hilbert}
  \Conn^d( X, G) \longto \Rep^d( X, G)
\end{equation} 
which is biholomorphic, but not algebraic; it is called
Riemann--Hilbert correspondence. The inverse map sends
a homomorphism $\rho: \pi_1( X, x_0) \longto G$ to the
associated principal $G$--bundle $E_{\rho}$, endowed with
the induced holomorphic connection $D_{\rho}$.

\subsection{$\lambda$--connections}
Let $p: E \longto X$ be a holomorphic principal $G$--bundle.
For any $\lambda \in \CC$, a \emph{$\lambda$--connection}
on $E$ is a holomorphic homomorphism of vector bundles
\begin{equation*}
  D : TX \longto \At( E)
\end{equation*}
such that $dp \circ D = \lambda \cdot \id_{TX}$ for the
epimorphism $dp$ in the Atiyah sequence \eqref{eq:atiyah}.
Therefore, a $0$--connection is a Higgs field,
and a $1$--connection is a holomorphic connection.

If $D$ is a $\lambda$--connection on $E$ with $\lambda \neq 0$,
then $\lambda^{-1} D$ is a holomorphic connection on $E$.
In particular, the pair $(E, D)$ is automatically semistable
in this case.

Let $\Hod^d( X, G)$ denote the moduli space of triples
$(\lambda, E, D)$, where $\lambda \in \CC$, $E$ is a
holomorphic principal $G$--bundle over $X$ of topological
type $d \in \pi_1( G)$, and $D$ is a semistable
$\lambda$--connection on $E$; see \cite{Si2}.
There is a canonical algebraic map
\begin{equation} \label{eq:pr_hodge}
  \pr = \pr_X: \Hod^d( X, G) \longonto \CC,
    \qquad ( \lambda, E, D) \longmapsto \lambda.
\end{equation}
Its fibers over $\lambda = 0$ and $\lambda = 1$ are
$\Higgs^d( X, G)$ and $\Conn^d( X, G)$, respectively.
The Riemann--Hilbert correspondence
\eqref{eq:Riemann-Hilbert} allows to define a holomorphic
open embedding
\begin{equation*}
  j = j_X: \CC^* \times \Rep^d( X, G) \longhookrightarrow
    \Hod^d( X, G), \qquad ( \lambda, \rho) \longmapsto
      ( \lambda, E_{\rho}, \lambda D_{\rho})
\end{equation*}
with image $\pr^{-1}( \CC^*)$. This map commutes with the
projections onto $\CC^*$.

\subsection{The Deligne--Hitchin moduli space}
The compact Riemann surface $X$ provides an underlying
real $C^\infty$ manifold $X_{\RR}$, and an almost complex
structure $J_X: TX_{\RR} \longto TX_{\RR}$. Since any
almost complex structure in real dimension two is integrable, 
\begin{equation*}
  \Xbar := ( X_{\RR}, -J_X)
\end{equation*}
is a compact Riemann surface as well. It has the opposite
orientation, so
\begin{equation} \label{eq:Reps}
  \Rep^d( X, G) = \Rep^{-d}( \Xbar, G).
\end{equation}
The \emph{Deligne--Hitchin moduli space} $\MDH^d( X, G)$
is the complex analytic space obtained by gluing
$\Hod^d( X, G)$ and $\Hod^{-d}( \Xbar, G)$ along their
common open subspace
\begin{equation*}
  \Hod^d( X, G) \longhookleftarrow[j_X]
    \CC^* \times \Rep^d( X, G) \cong
    \CC^* \times \Rep^{-d}( \Xbar, G)
    \longhookrightarrow[j_{\Xbar}] \Hod^{-d}( \Xbar, G)
\end{equation*}
where the isomorphism in the middle sends $( \lambda, \rho)$
to $( 1/\lambda, \rho)$; see \cite{Si3,De}.
The projections $\pr_X$ on $\Hod^d( X, G)$ and
$1/\pr_{\Xbar}$ on $\Hod^{-d}( \Xbar, G)$ patch together
to a holomorphic map
\begin{equation*}
  \MDH^d( X, G) \longonto \CC\PP^1 = \CC \cup \{ \infty \}.
\end{equation*}
Its fiber over any $\lambda \in \CC^*$ is biholomorphic to
the representation space \eqref{eq:Reps}, whereas its fibers
over $\lambda = 0$ and $\lambda = \infty$ are 
$\Higgs^d( X, G)$ and $\Higgs^{-d}( \Xbar, G)$, respectively.

\section{Fixed points of the natural $\CC^*$--action}
\label{sec:action}
The group $\CC^*$ acts algebraically on the moduli space
$\Higgs^d( X, G)$, via the formula
\begin{equation} \label{eq:action_higgs}
  t \cdot ( E, \theta) := ( E, t \theta).
\end{equation}
The fixed point locus $\Higgs^d( X, G)^{\CC^*}$ contains
the closed subvariety $\Bun^d( X, G)$.
\begin{proposition} \label{prop:fixed_higgs}
  Let $Z$ be an irreducible component of
  $\Higgs^d( X, G)^{\CC^*}$. Then one has
  \begin{equation*}
    \dim Z \leq (g-1) \cdot \dim_{\mathbb C} G,
  \end{equation*}
  with equality holding only for $Z = \Bun^d( X, G)$.
\end{proposition}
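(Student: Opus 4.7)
The plan is to combine the algebraic symplectic structure $\omega$ on $\Higgs^d(X,G)$ with the Bia\l{}ynicki--Birula theory for the $\CC^*$-action \eqref{eq:action_higgs}. The crucial observation is that $\omega$ has weight $+1$ under this action: $\Higgs^d(X,G)$ contains a Zariski open subset isomorphic to an open subset of $T^*\Bun^d(X,G)$ with its canonical symplectic form, and the $\CC^*$-action scales the cotangent fibers by $t$, so $\omega$ is multiplied by $t$.

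Choose a point $p \in Z$ smooth both in $Z$ and in $\Higgs^d(X,G)$; such a point exists because generic points of $Z$ are stable Higgs bundles, which form the smooth locus of the moduli. The $\CC^*$-action on $T_p \Higgs^d(X,G)$ gives a weight decomposition
\begin{equation*}
  T_p\Higgs^d(X,G) \,=\, \bigoplus_{w \in \ZZ} T_w,
\end{equation*}
with $T_p Z = T_0$. Since $\omega$ has weight $1$, it induces a non-degenerate pairing between $T_w$ and $T_{1-w}$, and in particular $\dim T_w = \dim T_{1-w}$. The index ranges $\{w \leq 0\}$ and $\{w \geq 1\}$ are disjoint, exhaust all integer weights, and pair with each other; hence
\begin{equation*}
  \dim T^{\leq 0} \,=\, \dim T^{\geq 1} \,=\, \tfrac{1}{2} \dim T_p\Higgs^d(X,G) \,=\, (g-1)\dim_{\CC} G.
\end{equation*}
Since $T_0 \subseteq T^{\leq 0}$, the desired inequality $\dim Z = \dim T_0 \leq (g-1)\dim_{\CC} G$ follows.

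In the equality case $\dim Z = (g-1)\dim_{\CC} G$, the Bia\l{}ynicki--Birula decomposition applied to the smooth locus of $\Higgs^d(X,G)$ gives an attractor cell
\begin{equation*}
  W^+(Z) \,=\, \{x \,:\, \lim_{t \to 0} t \cdot x \,\in\, Z\}
\end{equation*}
of dimension $\dim Z + \dim T^{>0} = 2(g-1)\dim_{\CC} G$, hence open and dense in the irreducible variety $\Higgs^d(X,G)$. The same reasoning, applied to the component $\Bun^d(X,G) \subseteq \Higgs^d(X,G)^{\CC^*}$ (irreducible of dimension $(g-1)\dim_{\CC} G$), shows that $W^+(\Bun^d(X,G))$ is also open and dense. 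Since distinct components of the fixed locus in the smooth locus attract disjoint cells, and two dense open subsets of an irreducible variety cannot be disjoint, we conclude $Z = \Bun^d(X,G)$. The main technical obstacle is working around the possible singularities of $\Higgs^d(X,G)$, which forces us to carry out all dimension estimates on its open dense smooth locus, using that it meets every component of the fixed locus in a dense open subset.
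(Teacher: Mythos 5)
Your core duality computation is sound and is in fact the same mechanism the paper uses, just phrased symplectically instead of via Serre duality on the hypercohomology of the deformation complex $\ad(E)\xrightarrow{\ad(\theta)}\ad(E)\otimes K_X$. But there is a genuine gap at the very first step: you assert that a generic point of an arbitrary irreducible component $Z$ of $\Higgs^d(X,G)^{\CC^*}$ is a stable Higgs bundle, and hence that the smooth locus of $\Higgs^d(X,G)$ meets $Z$ in a dense open set. This is not justified, and it is exactly the hard case: a component of the fixed locus can consist entirely of strictly polystable Higgs $G$--bundles (those admitting a reduction $(E_L,\theta)$ to a proper Levi subgroup $L\subsetneqq G$), and such points lie in the non-stable, in general singular, part of the moduli space. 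For these components your weight decomposition of $T_p\Higgs^d(X,G)$ and the Bia\l{}ynicki--Birula attractor dimension formula are simply unavailable, so neither the inequality nor the equality statement is proved for them. The paper spends the entire second half of its proof on precisely this case, reducing to a stable $\CC^*$--fixed Higgs $(L/C)$--bundle ($C$ the connected center of $L$), and counting moduli: $(g-1)\dim_{\CC}(L/C)+g\dim_{\CC}C<(g-1)\dim_{\CC}G$, using $\dim_{\CC}G-\dim_{\CC}L\geq 2\dim_{\CC}C$. Some substitute for this argument is indispensable; it cannot be absorbed into the phrase ``carry out all dimension estimates on the smooth locus.''

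Two smaller points. First, even at stable fixed points the coarse moduli space need only be smooth up to a finite quotient, so you should either work with the deformation space $\HH^1$ of the pair directly (as the paper does) or argue that finite quotients do not affect the dimension counts. Second, in your equality argument the attractor sets of two distinct irreducible components of the fixed locus need not be disjoint, since distinct components may intersect; to conclude $Z=\Bun^d(X,G)$ you would still have to rule out that the dense attractors only meet over $Z\cap\Bun^d(X,G)$, and you would need a version of the Bia\l{}ynicki--Birula fibration on a non-complete, non-smooth space. By contrast, the paper avoids all of this by proving a strict inequality $\dim\HH^1(X,C^{\bullet}_0)<(g-1)\dim_{\CC}G$ at every stable fixed point with $\theta\neq 0$, the extra input being that the top weight piece satisfies $\dim\rmH^1(X,E_H\times^H\fg_N)>0$, which follows from stability applied to the parabolic reduction $E_P$ and the dominance of the determinant character of $P$ on $\fg_N$. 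If you want to keep your symplectic formulation, you should add an analogue of that positivity step (to show the weight--zero part is strictly less than half) and, unavoidably, a separate treatment of the purely polystable components.
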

\begin{proof}
  Let $( E, \theta)$ be a stable Higgs $G$--bundle over $X$.
  Its infinitesimal deformations are, according to
  \cite[Theorem 2.3]{BR}, governed by the complex of
  vector bundles
  \begin{equation} \label{eq:complex}
    C^0 \,:=\, \ad( E) \,\xrightarrow{\ad( \theta)}\,
      \ad( E) \otimes K_X \,=: \,C^1
  \end{equation}
  over $X$. Since $( E, \theta)$ is stable, it has no
  infinitesimal automorphisms, so
  \begin{equation*}
    \HH^0( X, C^{\bullet}) \,=\, 0\, .
  \end{equation*}
  The Killing form on $\fg$ induces isomorphisms $\fg^* \cong \fg$
  and $\ad( E)^* \cong \ad( E)$. Hence the vector bundle $\ad( E)$
  has degree $0$. Serre duality allows us to conclude
  \begin{equation*}
    \HH^2( X, C^{\bullet}) \,=\, 0\, .
  \end{equation*}
  Using all this, the Riemann--Roch formula yields
  \begin{equation} \label{eq:dim}
    \dim \HH^1( X, C^{\bullet}) \,=\, 2( g-1) \cdot \dim_{\CC} G\, .
  \end{equation}

  From now on, we assume that the point $( E, \theta)$
  is fixed by $\CC^*$, and we also assume $\theta \neq 0$.
  Then $( E, \theta) \cong ( E, t \theta)$ for all $t \in \CC^*$,
  so the sequence of complex algebraic groups
  \begin{equation*}
    1 \longto \Aut( E, \theta) \longto \Aut( E, \CC \theta)
      \longto \Aut( \CC \theta) = \CC^* \longto 1
  \end{equation*}
  is exact. Because $( E, \theta)$ is stable, $\Aut( E, \theta)$
  is finite. Consequently, the identity component of
  $\Aut(E, \CC \theta)$ is isomorphic to $\CC^*$.
  This provides an embedding
  \begin{equation*}
    \CC^* \longhookrightarrow \Aut( E)\, , \qquad
      t \longmapsto \varphi_t\, ,
  \end{equation*}
  and an integer $w \neq 0$ with $\varphi_t( \theta)\,=\,t^w \cdot
  \theta$ for all $t \in \CC^*$. We may assume that $w \geq 1$.

  Choose a point $e_0 \in E$.
  Then there is a unique group homomorphism
  \begin{equation*}
    \iota\,:\, \CC^* \,\longto\, G
  \end{equation*}
  such that $\varphi_t( e_0) = e_0 \cdot \iota( t)$ for all
  $t \in \CC^*$. The conjugacy class of $\iota$ doesn't depend
  on $e_0$, since the space of conjugacy classes
  $\Hom( \CC^*, G)/G$ is discrete. The subset
  \begin{equation*}
    E_H \,:=\, \{ e \in E: \varphi_t( e) = e \cdot \iota( t)
      \text{ for all } t \in \CC^*\}
  \end{equation*}  
  of $E$ is a holomorphic reduction of structure group
  to the centralizer $H$ of $\iota( \CC^*)$ in $G$. Let
  \begin{equation} \label{eq:grading}
    \fg \,=\, \bigoplus_{n \in \ZZ} \fg_n
  \end{equation}
  denote the eigenspace decomposition given by the adjoint action
  of $\CC^*$ on $\fg$ via $\iota$. 

  Let $N \in \ZZ$ be maximal with $\fg_N \neq 0$.   
  Let $P \subset G$ be the parabolic subgroup with
  \begin{equation*}
    \Lie( P) \,=\, \bigoplus_{n \geq 0} \fg_n \subset \fg\, .
  \end{equation*}
  Since $H \subset G$ has Lie algebra $\fg_0$,
  it is a Levi subgroup in $P$. Choose subgroups
  \begin{equation*}
    \iota( \CC^*) \,\subseteq\, T \,\subseteq \,
B \,\subseteq\, P \,\subset\, G
  \end{equation*}
  such that $T$ is a maximal torus in $H \subset G$
  and $B$ is a Borel subgroup in $G$. Let
  \begin{equation*}
    \alpha_j: T \longto \CC^* \qquad\text{and}\qquad
    \alpha_j^{\vee}: \CC^* \longto T
  \end{equation*}
  be the resulting simple roots and coroots of $G$. We denote by
  $\langle \_, \_ \rangle$ the natural pairing between characters
  and cocharacters of $T$. Let $\alpha_j$ be a simple root of $G$
  with $\langle \alpha_j, \iota \rangle > 0$, and let $\beta$ be
  a root of $G$ with $\langle \beta, \iota \rangle = N$.
  Then the elementary reflection
  \begin{equation*}
    s_j( \beta)\,
      =\, \beta - \langle \beta, \alpha_j^{\vee} \rangle \alpha_j
  \end{equation*}
  is a root of $G$, so $\langle s_j(\beta), \iota \rangle \leq N$;
  this implies that $\langle \beta, \alpha_j^{\vee} \rangle\,\geq\, 0$.
  The sum of all such roots $\beta$ with
  $\langle \beta, \iota \rangle = N$ is the restriction
  $\chi|_T$ of the determinant
  \begin{equation} \label{eq:character}
    \chi: P \longto \Aut( \fg_N) \xrightarrow{\det} \CC^*
  \end{equation}
  of the adjoint action of $P$ on $\fg_N$. Hence we conclude
  $\langle \chi|_T, \alpha_j^{\vee} \rangle \geq 0$ for all simple
  roots $\alpha_j$ with $\langle \alpha_j, \iota \rangle > 0$.
  This means that the character $\chi$ of $P$ is dominant.

  The decomposition \eqref{eq:grading} of $\fg$
  induces a vector bundle decomposition
  \begin{equation*}
    \ad( E)\,=\, \bigoplus_{n \in \ZZ} E_H \times^H \fg_n\, .
  \end{equation*}
  Since $\CC^*$ acts with weight $w$ on the Higgs field $\theta$
  by construction, we have
  \begin{equation} \label{eq:theta}
    \theta \,\in\,
      \rmH^0 \big( X, (E_H \times^H \fg_w) \otimes K_X \big)\, .
  \end{equation}
  In particular, $\theta \in \rmH^0( X, \ad( E_P) \otimes K_X)$
  for the reduction $E_P := E_H \times^H P \subseteq E$ of the
  structure group to $P$. The Higgs version of the stability
  criterion \cite[Lemma 2.1]{Ra} yields
  \begin{equation*}
    \degree( E_H \times^H \fg_N) \,\leq\, 0
  \end{equation*}
  since $P$ acts on $\det( \fg_N)$ via the dominant character
  $\chi$ in \eqref{eq:character}. Now Riemann--Roch implies
the following:
  \begin{equation} \label{eq:H1}
    \dim \rmH^1( X, E_H \times^H \fg_N)
      \,\geq\, (g-1) \cdot \dim_{\CC} \fg_N \,>\,  0\, .
  \end{equation}

  The complex $C^{\bullet}$ in \eqref{eq:complex} is, due to
  \eqref{eq:theta}, the direct sum of its subcomplexes
  $C^{\bullet}_n$ given by
  \begin{equation*}
    C^0_n\,:=\,E_H \times^H \fg_n\,\xrightarrow{\ad( \theta)}\,
      (E_H \times^H \fg_{n+w}) \otimes K_X \,=:\, C^1_n\, .
  \end{equation*}
  Thus the hypercohomology of $C^{\bullet}$ decomposes as well;
  in particular, we have
  \begin{equation*}
    \HH^1( X, C^{\bullet}) \,=\,
      \bigoplus_{n \in \ZZ} \HH^1( X, C^{\bullet}_n)\, .
  \end{equation*}
  In the last nonzero summand $C^{\bullet}_N$,
  we have $C^1_N = 0$ and hence
  \begin{equation*}
    \dim \HH^1(X,C^{\bullet}_N)
    \,=\, \dim \rmH^1( X, E_H \times^H \fg_N) \,>\, 0
  \end{equation*}
  due to \eqref{eq:H1}. Since $\fg_n^* \cong \fg_{-n}$ via the
  Killing form on $\fg$, Serre duality yields in particular
  \begin{equation*}
    \dim \HH^1(X,C^{\bullet}_0) = \dim \HH^1(X,C^{\bullet}_{-w}).
  \end{equation*}
  Taken together, the last three formulas and the equation
  \eqref{eq:dim} imply that
  \begin{equation*}
    \dim \HH^1(X,C^{\bullet}_0) \,<\, \frac{1}{2} \dim
      \HH^1( X,C^{\bullet}) \,=\,(g-1) \cdot \dim_{\CC} G\, .
  \end{equation*}
  But $\HH^1(X,C^{\bullet}_0)$ parameterizes infinitesimal
  deformations of pairs $(E_H, \theta)$ consisting of a principal
  $H$--bundle $E_H$ and a section $\theta$ as in \eqref{eq:theta};
  see \cite[Theorem 2.3]{BR}. This proves that
  \begin{equation*}
    \dim Z < (g-1) \cdot \dim_{\CC} G
  \end{equation*}
  for every irreducible component $Z$ of the fixed point locus
  $\Higgs^d( X, G)^{\CC^*}$ such that $Z$ contains stable Higgs
  $G$--bundles $( E, \theta)$ with $\theta \neq 0$.

  The non-stable points in $\Higgs^d( X, G)$ correspond to
  polystable Higgs $G$--bundles $( E, \theta)$. Polystability
  means that $E$ admits a reduction of structure group $E_L$
  to a Levi subgroup $L \subsetneqq G$ of a parabolic subgroup
  in $G$ such that $\theta$ is a section of the subbundle
  \begin{equation*}
    \ad( E_L) \otimes K_X \,\subset\, \ad( E) \otimes K_X
  \end{equation*}
  and the pair $( E_L, \theta)$ is stable.
  Let $C \subseteq L$ be the identity component of the center,
  and let $\fc \subseteq \fl$ be their Lie algebras. Then
  $E_{L/C} := E_L/C$ is a principal $(L/C)$--bundle over $X$, and
  \begin{equation*}
    \ad( E_L) \cong (\fc \otimes \calO_X) \oplus \ad( E_{L/C})
  \end{equation*}
  since $\fl = \fc \oplus [ \fl, \fl]$, where the subalgebra
  $[ \fl, \fl] \subseteq \fl$ is also the Lie algebra of $L/C$.
  We have
  \begin{equation*}
    \dim_{\CC} G - \dim_{\CC} L \,\geq\, 2 \dim_{\CC} C
  \end{equation*}
  because maximal Levi subgroups in $G$ have $1$-dimensional
  center and at least one pair of opposite roots less than $G$;
  the other Levi subgroups can be reached by iterating this.

  Now suppose that $\CC^*$ fixes the point $( E, \theta)$. Then
  $( E_L, \theta) \cong ( E_L, t \theta)$ for all $t \in \CC^*$.
  But the action of $\Aut( E_L)$ on the direct summand
  $\fc \otimes \calO_X$ of $\ad( E_L)$ is trivial, since the
  adjoint action of $L$ on $\fc$ is trivial. So $\theta$ lives
  in the other summand of $\ad( E_L)$, meaning
  \begin{equation*}
    \theta\,\in\,\rmH^0 \big( X, \ad( E_{L/C}) \otimes K_X \big)\, .
  \end{equation*}
  The Higgs $(L/C)$--bundle $( E_{L/C}, \theta)$ is still stable
  and fixed by $\CC^*$; we have already proved that the locus of
  such has dimension $\leq (g-1) \cdot \dim_{\CC} (L/C)$. The
  abelian variety $\Bun^0( X, C)$ acts simply transitively on
  lifts of $E_{L/C}$ to a principal $L$--bundle $E_L$, so these
  lifts form a family of dimension $g \cdot \dim_{\CC} C$.
  Hence the pairs $( E_L, \theta)$ in question have at most
  \begin{equation*}
    (g-1) \cdot \dim_{\CC} (L/C) + g \cdot \dim_{\CC} C
      < (g-1) \cdot \dim_{\CC} G
  \end{equation*}
  moduli. This implies that $\dim Z \,< \, (g-1) \cdot \dim_{\CC} G$
  for each non-stable component $Z$ of the fixed point locus,
  since there are only finitely many possibilities for $L$
  up to conjugation.
\end{proof}
The algebraic $\CC^*$--action \eqref{eq:action_higgs} on
$\Higgs^d( X, G)$ extends naturally to an algebraic
$\CC^*$--action on $\Hod^d( X, G)$, which is given by the formula
\begin{equation} \label{eq:action_hodge}
  t \cdot ( \lambda, E, D) := ( t \lambda, E, t D).
\end{equation}
A point $( \lambda, E, D)$ can only be fixed by this action if
$\lambda = 0$, so Proposition \ref{prop:fixed_higgs}
yields the following corollary:

\begin{corollary} \label{cor:fixed_hodge}
  Let $Z$ be an irreducible component of $\Hod^d( X, G)^{\CC^*}$.
  Then one has
  \begin{equation*}
    \dim Z \leq (g-1) \cdot \dim_{\mathbb C} G,
  \end{equation*}
  with equality only for $Z = \Bun^d( X, G)$.
\end{corollary}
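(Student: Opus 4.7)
The plan is to reduce the statement directly to Proposition \ref{prop:fixed_higgs} by using the explicit formula \eqref{eq:action_hodge} for the $\CC^*$-action on $\Hod^d(X,G)$. The first step is the observation that the projection $\pr: \Hod^d(X,G) \longonto \CC$ in \eqref{eq:pr_hodge} is $\CC^*$-equivariant for the weight-one action $\lambda \longmapsto t\lambda$ on the base. Hence any $\CC^*$-fixed point $(\lambda, E, D)$ must satisfy $t\lambda = \lambda$ for all $t \in \CC^*$, forcing $\lambda = 0$. In other words, $\Hod^d(X,G)^{\CC^*} \subseteq \pr^{-1}(0)$.

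The second step is to identify the fiber $\pr^{-1}(0)$ with $\Higgs^d(X,G)$ as stated in Section \ref{sec:moduli}: a $0$-connection on $E$ is precisely a Higgs field, and the restriction of \eqref{eq:action_hodge} to this fiber sends $(0, E, \theta)$ to $(0, E, t\theta)$, which agrees with the $\CC^*$-action \eqref{eq:action_higgs} on $\Higgs^d(X,G)$. Consequently, the inclusion $\pr^{-1}(0) \hookrightarrow \Hod^d(X,G)$ induces an identification
\begin{equation*}
  \Hod^d(X,G)^{\CC^*} \,=\, \Higgs^d(X,G)^{\CC^*}
\end{equation*}
as closed subvarieties, and this identification preserves irreducible components.

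The final step is simply to invoke Proposition \ref{prop:fixed_higgs}: every irreducible component $Z$ of $\Hod^d(X,G)^{\CC^*}$ is an irreducible component of $\Higgs^d(X,G)^{\CC^*}$, so $\dim Z \leq (g-1) \cdot \dim_{\CC} G$, with equality forcing $Z = \Bun^d(X,G)$. There is no real obstacle here; the only point worth being careful about is the equivariance of $\pr$, which is immediate from the formula \eqref{eq:action_hodge}. In particular, no additional deformation-theoretic argument is needed beyond what was already carried out for Proposition \ref{prop:fixed_higgs}.
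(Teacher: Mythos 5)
Your argument is correct and matches the paper's own reasoning: the formula \eqref{eq:action_hodge} forces any fixed point to have $\lambda = 0$, the fiber $\pr^{-1}(0)$ is $\Higgs^d(X,G)$ with its action \eqref{eq:action_higgs}, and Proposition \ref{prop:fixed_higgs} then gives the bound and the equality case. Nothing further is needed.
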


The algebraic $\CC^*$--action \eqref{eq:action_hodge}
on $\Hod^d( X, G)$ extends naturally to a holomorphic
$\CC^*$--action on $\MDH^d( X, G)$, which is on the other
open patch $\Hod^{-d}( \Xbar, G)$ given by the formula
\begin{equation*}
  t \cdot (\lambda, E, D) := (t^{-1} \lambda, E, t^{-1} D).
\end{equation*}
Applying Corollary \ref{cor:fixed_hodge} to both $\Hod^d( X, G)$
and $\Hod^{-d}( \Xbar, G)$, one immediately gets
\begin{corollary} \label{cor:fixed_MDH}
  Let $Z$ be an irreducible component of $\MDH^d( X, G)^{\CC^*}$.
  Then one has
  \begin{equation*}
    \dim Z \leq (g-1) \cdot \dim_{\mathbb C} G,
  \end{equation*}
  with equality only for $Z = \Bun^d( X, G)$
  and for $Z = \Bun^{-d}( \Xbar, G)$.
\end{corollary}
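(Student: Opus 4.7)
The plan is to reduce the statement to Corollary \ref{cor:fixed_hodge}, which already bounds the dimension of every irreducible component of the fixed point locus in each of the two Hodge patches. The key observation is that the $\CC^*$--action on $\MDH^d(X,G)$ has no fixed points on the overlap $j_X(\CC^* \times \Rep^d( X, G)) = j_{\Xbar}(\CC^* \times \Rep^{-d}(\Xbar,G))$, so that the fixed locus of $\MDH^d(X,G)$ is simply the disjoint union of the fixed loci of $\Hod^d( X, G)$ and $\Hod^{-d}(\Xbar,G)$.

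To see the no-fixed-points-on-the-overlap statement, I would use the given explicit formulas: on the $\Hod^d(X,G)$ side, the action is $t\cdot(\lambda,E,D)=(t\lambda,E,tD)$, so a fixed point forces $t\lambda = \lambda$ for all $t\in\CC^*$, which is possible only if $\lambda = 0$. Symmetrically, on the $\Hod^{-d}(\Xbar,G)$ side, the action is $t\cdot(\lambda,E,D)=(t^{-1}\lambda,E,t^{-1}D)$, so fixed points there also force $\lambda = 0$, which in the gluing corresponds to the fiber over $\infty \in \CC\PP^1$ of $\MDH^d(X,G)$. In particular, the overlap, where $\lambda$ is neither $0$ nor $\infty$, contains no fixed points at all.

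Consequently, every irreducible component $Z$ of $\MDH^d(X,G)^{\CC^*}$ is entirely contained either in $\Hod^d(X,G)$ or in $\Hod^{-d}(\Xbar,G)$, and is therefore an irreducible component of the respective fixed point locus. Applying Corollary \ref{cor:fixed_hodge} on each patch gives $\dim Z \leq (g-1) \cdot \dim_{\CC} G$, with equality only for $Z = \Bun^d(X,G)$ (inside $\Hod^d(X,G)$) or for $Z = \Bun^{-d}(\Xbar,G)$ (inside $\Hod^{-d}(\Xbar,G)$); this is exactly the assertion of the corollary.

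There is essentially no obstacle here beyond bookkeeping: the substantive input is Proposition \ref{prop:fixed_higgs} together with Corollary \ref{cor:fixed_hodge}, and the only verification needed is the disjointness of the two $\Hod$--fixed-loci inside $\MDH^d(X,G)$, which follows immediately from inspecting the two explicit formulas for the $\CC^*$--action on the gluing data.
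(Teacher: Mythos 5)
Your proposal is correct and follows exactly the paper's route: the paper likewise deduces the statement by applying Corollary \ref{cor:fixed_hodge} to the two patches $\Hod^d(X,G)$ and $\Hod^{-d}(\Xbar,G)$, leaving implicit the bookkeeping you spell out (that all fixed points lie over $0$ and $\infty$, so every component of $\MDH^d(X,G)^{\CC^*}$ is a component of one patch's fixed locus). Your write-up just makes this gluing detail explicit.
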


\section{Vector fields on the moduli spaces}
\label{sec:vectorfields}
A stable principal $G$--bundle $E$ over $X$ is called
\emph{regularly stable} if the automorphism group $\Aut( E)$
is just the center of $G$. The regularly stable locus
\begin{equation*}
  \Bun^{d, \regstab}( X, G) \subseteq \Bun^d( X, G)
\end{equation*}
is open, and coincides with the smooth locus of $\Bun^d( X, G)$;
see \cite[Corollary 3.4]{BH}.
\begin{proposition} \label{prop:fields}
  There are no nonzero holomorphic vector fields
  on $\Bun^{d, \regstab}( X, G)$.
\end{proposition}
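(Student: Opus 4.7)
The plan is to associate to a holomorphic vector field $V$ on $U := \Bun^{d,\regstab}(X,G)$ a weight--one holomorphic function on $\Higgs^d(X,G)$, and then to rule out the existence of such a nonzero function by means of the Hitchin fibration.

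By Serre duality together with the self-duality of $\ad(E)$ induced by the Killing form, one has $T^*_E \Bun^d(X,G) \cong \rmH^0(X, \ad(E) \otimes K_X)$ at every regularly stable bundle $E$; thus the cotangent bundle $T^* U$ sits naturally as an open subvariety of $\Higgs^d(X,G)$. The pairing
\[
  H_V(E, \theta) \,:=\, \theta(V(E))
\]
defines a holomorphic function on $T^* U$ which, being linear along the cotangent fibres, is homogeneous of weight one under the action \eqref{eq:action_higgs}. I would next verify that the complement of $T^* U$ in $\Higgs^d(X,G)$ has codimension at least two: the non-regularly-stable locus in $\Bun^d(X,G)$ coincides with the singular locus of the normal projective variety $\Bun^d(X,G)$ by \cite[Corollary 3.4]{BH}, hence has codimension $\geq 2$, and a short stratification argument --- comparing the jump of $\dim \rmH^0(X, \ad(E))$ with the codimension of the corresponding stratum in $\Bun^d(X,G)$ --- extends this bound to $\Higgs^d(X,G)$, both over the stable-but-non-regularly-stable stratum and over the Higgs pairs whose underlying bundle is unstable. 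Because $\Higgs^d(X,G)$ is normal, Hartogs' theorem then produces a holomorphic extension $\tilde H_V$ of $H_V$ to the whole of $\Higgs^d(X,G)$, still of weight one.

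The final step uses the Hitchin map $h \colon \Higgs^d(X,G) \to B := \bigoplus_{i=1}^r \rmH^0(X, K_X^{d_i})$, where $d_1, \dots, d_r$ are the degrees of a homogeneous set of generators of the $G$-invariants on $\fg$; since $G$ is semisimple, all $d_i \geq 2$. The map $h$ is proper with connected generic fibre a compact torus, so $\tilde H_V$ is constant on each fibre and descends to a holomorphic function on (a dense open subset of) $B$. Under $h$ the action \eqref{eq:action_higgs} becomes the $\CC^*$-action on $B$ with positive weights $d_i \geq 2$; expanding in a Taylor series at the origin shows that no nonzero weight-one holomorphic function on $B$ can exist. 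Therefore $\tilde H_V = 0$, whence $H_V = 0$ on $T^* U$, and consequently $V = 0$.

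The hardest point is the codimension estimate, where one has to control how much the Higgs-fibre dimension $(g-1) \dim_{\CC} G + \dim \rmH^0(X, \ad(E))$ jumps along the non-regularly-stable strata in $\Bun^d(X,G)$, and to handle separately the contribution of Higgs pairs over unstable bundles. A secondary technicality is the descent in the Hitchin step, which amounts to the affinisation identity $h_* \calO_{\Higgs^d(X,G)} = \calO_B$; if the analytic descent proves awkward, one can pass to the algebraic category and invoke this identity directly.
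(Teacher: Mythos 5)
Your overall strategy is sound, and it is in essence the classical argument behind the very result the paper invokes: the paper's own ``proof'' is simply the citation \cite[Corollary III.3]{Fa}, so you are reconstructing that argument rather than diverging from it. The final steps are fine: a vector field $V$ gives the fibrewise-linear function $H_V$ on $T^*\Bun^{d,\regstab}(X,G)\subseteq\Higgs^d(X,G)$, of weight one for the action \eqref{eq:action_higgs}; the Hitchin map is proper and $\CC^*$--equivariant with base weights $\geq 2$ (here semisimplicity of $G$ is used), so a weight-one function pulled back from the base must vanish. (Two citable points you should make explicit: properness of the Hitchin map for principal $G$--Higgs bundles, and connectedness of its generic fibres for the fixed topological type $d$ --- without connectedness the Stein factorization could introduce a finite cover of the base on which weight-one functions do exist.)

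The genuine gap is the codimension-two estimate, which you correctly identify as the crux but whose sketch does not work as stated. You propose to compare ``the jump of $\dim \rmH^0(X,\ad(E))$ with the codimension of the corresponding stratum in $\Bun^d(X,G)$''; but for the most delicate part of the complement --- semistable Higgs pairs $(E,\theta)$ whose underlying bundle $E$ is \emph{unstable} --- there is no corresponding stratum in $\Bun^d(X,G)$ at all, since that moduli space only parametrizes semistable bundles. One must instead stratify by Harder--Narasimhan type (on the stack of bundles or on suitable parameter spaces), bound $\rmH^0(X,\ad(E))$ along each stratum, and use semistability of the \emph{pair} (the HN parabolic reduction cannot be $\theta$--invariant) both to cut the dimension and to bound the possible instability types, so that only finitely many strata occur; the resulting bound is of the shape ``codimension at least $g-1$'', which is where the hypothesis $g\geq 3$ (or at least $g\geq 2$ with low-rank care) enters. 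In addition, over strictly semistable bundles the forgetful description breaks down because of S-equivalence/GIT identifications, so that locus needs a separate estimate (e.g.\ via reduction to Levi subgroups, as in the second half of the proof of Proposition \ref{prop:fixed_higgs}). None of this is a ``short'' verification; as written, the Hartogs extension of $H_V$ to all of $\Higgs^d(X,G)$ --- and hence the whole proof --- rests on an unproved and nontrivial estimate. The parts of your codimension claim that are easy are only the stable-but-not-regularly-stable locus (where $\rmH^0(X,\ad(E))=0$, so the codimension in $\Bun^d(X,G)$, which is at least two by \cite[Corollary 3.4]{BH} and normality, passes directly to the Higgs moduli space).
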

\begin{proof}
  This statement is contained in \cite[Corollary III.3]{Fa}.
\end{proof}
\begin{proposition} \label{prop:forms}
  There are no nonzero holomorphic $1$--forms
  on $\Bun^{d, \regstab}( X, G)$.
\end{proposition}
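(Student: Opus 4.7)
The plan is to imitate the approach of Proposition \ref{prop:fixed_higgs}, converting a hypothetical nonzero holomorphic $1$-form on $\Bun^{d,\regstab}(X,G)$ into a $\CC^*$--invariant subvariety of $\Higgs^d(X,G)$ whose existence is then ruled out by a combination of Proposition \ref{prop:fixed_higgs} and Proposition \ref{prop:fields}.

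First I would identify the open subset $\Higgs^{d,\regstab}(X,G) \subseteq \Higgs^d(X,G)$ consisting of those Higgs bundles whose underlying principal $G$--bundle is regularly stable with the total space of the cotangent bundle $T^*\Bun^{d,\regstab}(X,G)$. Indeed, at a regularly stable $E$, deformation theory gives $T_E\Bun^{d,\regstab}(X,G) \cong \rmH^1(X, \ad(E))$, and Serre duality combined with the self-duality $\ad(E)^* \cong \ad(E)$ induced by the Killing form identifies the dual with $\rmH^0(X, \ad(E) \otimes K_X)$, precisely the space of Higgs fields on $E$. Under this identification the $\CC^*$--action \eqref{eq:action_higgs} becomes fibrewise scaling, and a holomorphic $1$-form $\omega$ on $\Bun^{d,\regstab}(X,G)$ is the same data as an algebraic section $s_\omega \colon \Bun^{d,\regstab}(X,G) \longto \Higgs^d(X,G)$ of the forgetful map, $E \mapsto (E, \omega(E))$.

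Next I would assume for contradiction that $\omega \not\equiv 0$ and form the Zariski closure
$$
Y \,:=\, \overline{ \bigl\{ (E, t\omega(E)) \,:\, E \in \Bun^{d,\regstab}(X,G),\ t \in \CC \bigr\} } \,\subseteq\, \Higgs^d(X,G).
$$
Since $(t, E) \mapsto (E, t\omega(E))$ is injective on the dense open locus where $\omega$ does not vanish, $Y$ is irreducible of dimension $(g-1) \cdot \dim_\CC G + 1$. It is $\CC^*$--invariant, and the limits as $t \to 0$ exhibit $\Bun^d(X,G)$ as lying inside the $\CC^*$--fixed locus $Y^{\CC^*}$. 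Because $\CC^*$ acts non-trivially on $Y$, we have $\dim Y^{\CC^*} \leq (g-1) \cdot \dim_\CC G$; Proposition \ref{prop:fixed_higgs} then forces the top-dimensional component of $Y^{\CC^*}$ to coincide with $\Bun^d(X,G)$.

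The main obstacle is turning this configuration into a genuine contradiction. The strategy here is to combine the analysis of $Y$ with Proposition \ref{prop:fields}. The normal sheaf to the embedding $\Bun^{d,\regstab}(X,G) \hookrightarrow Y$ is the saturated rank-one subsheaf $L_\omega \subseteq \Omega^1_{\Bun^{d,\regstab}(X,G)}$ generated by $\omega$, on which $\CC^*$ acts with weight $+1$. Contracting the infinitesimal generator of the $\CC^*$--action on $\Higgs^d$ against the natural holomorphic symplectic form produces a vertical holomorphic vector field of $\CC^*$--weight $-1$ on $\Higgs^{d,\regstab}(X,G)$; combining this, via the Bialynicki-Birula retraction of $Y$ onto $\Bun^{d,\regstab}(X,G)$, with the duality between $L_\omega$ and a coherent subsheaf of $T\Bun^{d,\regstab}(X,G)$ is expected to yield a nonzero holomorphic vector field on $\Bun^{d,\regstab}(X,G)$, contradicting Proposition \ref{prop:fields}. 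The technical heart of the argument, analogous to the weight decomposition bookkeeping at the end of the proof of Proposition \ref{prop:fixed_higgs}, will be to verify that this construction genuinely produces a nonzero global holomorphic vector field rather than merely a section of a line subsheaf or a vector field defined only away from the zero locus of $\omega$.
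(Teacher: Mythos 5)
Your reduction of a hypothetical $1$--form $\omega$ to a section $s_\omega$ of the forgetful map $\Higgs^{d,\regstab}(X,G)\to\Bun^{d,\regstab}(X,G)$ and the construction of the $\CC^*$--invariant variety $Y$ are fine as far as they go, but they yield no contradiction by themselves: the fixed locus $Y^{\CC^*}$ containing $\Bun^d(X,G)$ and having dimension $(g-1)\cdot\dim_{\CC}G$ is perfectly consistent with Proposition \ref{prop:fixed_higgs}, since that top-dimensional component is $\Bun^d(X,G)$ itself. The entire burden therefore falls on your final paragraph, which you yourself flag as unverified (``is expected to yield''), and the mechanism sketched there does not work. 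Contracting the infinitesimal generator of the $\CC^*$--action (the Euler vector field of the cotangent bundle) against the canonical symplectic form produces the tautological Liouville $1$--form, not a vector field; and both the Euler field and the Liouville form vanish identically along the zero section $\Bun^{d,\regstab}(X,G)$, so no nonzero holomorphic vector field on $\Bun^{d,\regstab}(X,G)$ can be extracted from them by restriction. More fundamentally, vertical vector fields along the zero section of a cotangent bundle are canonically the same thing as $1$--forms on the base --- this is exactly the decomposition \eqref{eq:decomposition} used in Corollary \ref{cor:higgs} --- so any attempt to deduce the vanishing of $1$--forms from the vanishing of vector fields (Proposition \ref{prop:fields}) through the symplectic or cotangent structure is essentially circular; indeed the paper needs Proposition \ref{prop:forms} as an input \emph{independent} of Proposition \ref{prop:fields} precisely in Corollary \ref{cor:higgs}.

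For comparison, the paper's proof uses a genuinely different global ingredient: a sufficiently general fiber of the Hitchin map is an abelian variety $A$, and forgetting the Higgs field gives a dominant rational map $\varphi\colon A\dashrightarrow\Bun^{d,\regstab}(X,G)$ defined outside a subset of codimension at least two (Faltings). Then $\varphi^*\omega$ extends by Hartogs to a holomorphic, hence closed, $1$--form on $A$, so $\omega$ is closed; since $\rmH^1(\Bun^{d,\regstab}(X,G),\CC)=0$ one gets $\omega=df$ with $f$ holomorphic, hence constant, so $\omega=0$. Some input of this kind (or another genuinely new idea) is needed to close the gap in your argument.
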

\begin{proof}
  The moduli space of Higgs $G$--bundles is
  equipped with the \emph{Hitchin map}
  \begin{equation*}
    \Higgs^d( X, G) \longto
      \bigoplus_{i=1}^{\rank( G)} H^0( X, K^{\otimes n_i}_X)
  \end{equation*}
  where the $n_i$ are the degrees of generators for the
  algebra $\Sym( \fg^*)^G$; see \cite[\S~4]{Hi1}, \cite{La}.
  Any sufficiently general fiber of this Hitchin map
  is a complex abelian variety $A$, and
  \begin{equation*} \xymatrix{
    \varphi: A \ar@{-->}[r] & \Bun^{d, \regstab}( X, G),
      & (E, \theta) \longmapsto E,
  } \end{equation*}
  is a dominant rational map. This rational map $\varphi$ is
  defined outside a closed subscheme of codimension at least two;
  see \cite[Theorem II.6]{Fa}.

  Let $\omega$ be a holomorphic $1$--form on
  $\Bun^{d, \regstab}( X, G)$. Then $\varphi^* \omega$ extends to
  a holomorphic $1$--form on $A$ by Hartog's theorem. As any
  holomorphic $1$--form on $A$ is closed, it follows that $\omega$
  is closed. Since $H^1( \Bun^{d, \regstab}( X, G), \CC) = 0$ by
  \cite{AB}, we conclude $\omega = df$ for a holomorphic function
  $f$ on $\Bun^{d, \regstab}( X, G)$. But any such function $f$ is
  constant, so $\omega  = 0$.
\end{proof}
We denote by $\Higgs^{d, \regstab}(X, G) \subseteq \Higgs^d(X, G)$
the open locus of Higgs $G$--bundles $( E, \theta)$ for which
$E$ is regularly stable. The forgetful map
\begin{equation} \label{eq:forget_higgs}
  \Higgs^{d, \regstab}( X, G) \longto \Bun^{d, \regstab}( X, G),
    \qquad ( E, \theta) \longmapsto E,
\end{equation}
is an algebraic vector bundle with fibers
$\rmH^0( X, \ad( E) \otimes K_X) \cong \rmH^1( X, \ad( E))^*$,
so it is the cotangent bundle of $\Bun^{d, \regstab}( X, G)$.  
\begin{corollary} \label{cor:higgs}
  The restriction of the algebraic tangent bundle
  \begin{equation*}
    T \Higgs^{d, \regstab}( X, G) \longto
      \Higgs^{d, \regstab}( X, G)
  \end{equation*}
  to the subvariety $\Bun^{d, \regstab}( X, G) \subseteq
  \Higgs^{d, \regstab}( X, G)$ has no nonzero holomorphic sections.
\end{corollary}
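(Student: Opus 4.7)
The plan is to exploit the vector bundle structure of $\Higgs^{d,\regstab}(X,G)$ over $\Bun^{d,\regstab}(X,G)$ that was just identified in the paragraph preceding the corollary. Since the forgetful map \eqref{eq:forget_higgs} realizes $\Higgs^{d,\regstab}(X,G)$ as the total space of the cotangent bundle $T^*\Bun^{d,\regstab}(X,G)$, and $\Bun^{d,\regstab}(X,G) \subseteq \Higgs^{d,\regstab}(X,G)$ is the zero section of this bundle, the problem reduces to identifying the restriction of the tangent bundle along the zero section.

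The key step is the canonical splitting. For any algebraic vector bundle $V \longto M$ with projection $\pi$, restriction of $TV$ to the zero section $M \subset V$ decomposes as $TV|_M \cong TM \oplus V$, where the first summand is the horizontal part (tangent to the zero section) and the second summand is the vertical part (the fiber of $\pi$, which is $V$ itself since the normal bundle of the zero section is $V$). Applying this to $V = T^*\Bun^{d,\regstab}(X,G)$ yields a canonical isomorphism of algebraic vector bundles
\begin{equation*}
  T\Higgs^{d,\regstab}(X,G)\big|_{\Bun^{d,\regstab}(X,G)}
    \,\cong\,
  T\Bun^{d,\regstab}(X,G) \,\oplus\, T^*\Bun^{d,\regstab}(X,G)\, .
\end{equation*}

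To finish, a global holomorphic section of the restricted bundle therefore consists of a pair $(v, \omega)$ where $v$ is a holomorphic vector field on $\Bun^{d,\regstab}(X,G)$ and $\omega$ is a holomorphic $1$-form on $\Bun^{d,\regstab}(X,G)$. Proposition \ref{prop:fields} forces $v = 0$ and Proposition \ref{prop:forms} forces $\omega = 0$, so the section is zero. There is no serious obstacle here once the vector bundle structure is recognized; the only thing to be slightly careful about is that the splitting above is canonical and algebraic, so the decomposition of sections into a vector field part and a $1$-form part is legitimate.
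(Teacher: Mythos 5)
Your proposal is correct and follows essentially the same argument as the paper: identify $\Bun^{d,\regstab}(X,G)$ as the zero section of the cotangent bundle \eqref{eq:forget_higgs}, use the canonical splitting $(TV)|_M \cong TM \oplus V$ along the zero section, and then apply Proposition \ref{prop:fields} to the tangent summand and Proposition \ref{prop:forms} to the cotangent summand. Nothing is missing.
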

\begin{proof}
  The subvariety in question is the zero section
  of the vector bundle \eqref{eq:forget_higgs}.
  Given a vector bundle $V \longto M$ with zero section
  $M \subseteq V$, there is a natural isomorphism
  \begin{equation} \label{eq:decomposition}
    (TV)|_M \cong TM \oplus V
  \end{equation}
  of vector bundles over $M$. In our situation, both summands
  have no nonzero holomorphic sections, according to
  Proposition \ref{prop:fields} and Proposition \ref{prop:forms}.
\end{proof}
Let $\Conn^{d, \regstab}(X, G) \subseteq \Conn^d(X, G)$ denote the
open locus of holomorphic $G$--connections $( E, D)$ for which $E$
is regularly stable.
\begin{proposition} \label{prop:conn}
  There are no holomorphic sections for the forgetful map
  \begin{equation} \label{eq:forget_conn}
    \Conn^{d, \regstab}( X, G) \longto \Bun^{d, \regstab}( X, G),
    \qquad ( E, D) \longmapsto E.
  \end{equation}
\end{proposition}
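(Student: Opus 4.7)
The plan is to argue by contradiction, exploiting the dichotomy between projectivity of $\Bun^d( X, G)$ and affineness of $\Rep^d( X, G)$. So, suppose a holomorphic section $s: \Bun^{d, \regstab}( X, G) \to \Conn^{d, \regstab}( X, G)$ does exist, and write $s( E) = ( E, D( E))$. Then $s$ is automatically injective, being a section of the forgetful map. Composing it with the Riemann--Hilbert biholomorphism \eqref{eq:Riemann-Hilbert} produces an injective holomorphic map
\begin{equation*}
  \mu: \Bun^{d, \regstab}( X, G) \longto \Rep^d( X, G),
\end{equation*}
sending each $E$ to the monodromy class of $D( E)$.

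Next, I would extend $\mu$ across the non-regularly-stable locus. The moduli space $\Bun^d( X, G)$ is an irreducible normal projective variety, and its smooth locus is $\Bun^{d, \regstab}( X, G)$; since normal varieties are regular in codimension one, the complement $\Bun^d \setminus \Bun^{d, \regstab}$ is an analytic subset of codimension at least two. Fixing an embedding of the affine variety $\Rep^d( X, G)$ as a Zariski-closed subset of some $\CC^N$, the $N$ coordinate functions of $\mu$ extend by Hartogs' theorem to holomorphic functions on the whole of $\Bun^d( X, G)$. The polynomial equations cutting out $\Rep^d$ vanish on the dense open $\Bun^{d, \regstab}$, hence on all of $\Bun^d( X, G)$ by continuity, so one obtains a holomorphic extension $\bar\mu: \Bun^d( X, G) \to \Rep^d( X, G)$.

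Since $\Bun^d( X, G)$ is compact and connected, the holomorphic coordinate functions of $\bar\mu$ are all constant, and therefore $\bar\mu$ (and hence $\mu$ itself) is constant. But $\mu$ is injective by construction, so $\Bun^{d, \regstab}( X, G)$ must consist of a single point; this contradicts $\dim \Bun^{d, \regstab}( X, G) = ( g-1) \cdot \dim_{\CC} G > 0$ for $g \geq 3$ and $G$ nontrivial semisimple.

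The main obstacle is the extension step: one must ensure that $\mu$ extends holomorphically across the singular locus of $\Bun^d( X, G)$ while still landing in the affine target $\Rep^d( X, G)$. This relies both on the codimension-two regularity of the normal variety $\Bun^d$ and on the closedness of the affine subvariety $\Rep^d$ inside some $\CC^N$. Once the extension $\bar\mu$ is in place, the projective-versus-affine dichotomy closes the argument immediately.
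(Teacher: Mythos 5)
Your proof is correct, but it takes a genuinely different route from the paper. The paper argues intrinsically: the forgetful map \eqref{eq:forget_conn} is a holomorphic torsor under the cotangent bundle of $\Bun^{d, \regstab}( X, G)$, and by a lemma of Faltings it is isomorphic to the torsor of holomorphic connections on the ample line bundle $\calL$ with fibers $\det \rmH^1( X, \ad( E))$; since an ample line bundle has nonzero first Chern class, it admits no global holomorphic connection, so the torsor has no section. You instead combine the Riemann--Hilbert correspondence with the projective-versus-affine dichotomy: a section would give an injective holomorphic map $\mu$ from $\Bun^{d, \regstab}( X, G)$ to the affine variety $\Rep^d( X, G)$; since $\Bun^d( X, G)$ is normal and the regularly stable locus is its smooth locus, the complement has codimension at least two, so the coordinate functions of $\mu$ extend to the compact irreducible variety $\Bun^d( X, G)$ and are therefore constant, contradicting injectivity. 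Both arguments are sound. Yours is softer: it avoids Faltings' identification and the ampleness of $\calL$, at the price of invoking the non-algebraic Riemann--Hilbert biholomorphism and an extension theorem on a singular space; the statement you need is the second Riemann removable singularity theorem for normal complex spaces (codimension at least two, no boundedness hypothesis), which is exactly what normality provides, so your appeal to ``Hartogs'' should be read in that form. Also, checking that the extension lands in $\Rep^d( X, G)$ is superfluous, since constancy of the extended coordinate functions already forces $\mu$ to be constant. What the paper's route buys is a precise identification of the obstruction (the Atiyah class of $\calL$), in the spirit of the rest of Section \ref{sec:vectorfields}; what yours buys is independence from \cite[Lemma IV.4]{Fa}.
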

\begin{proof}
  The map \eqref{eq:forget_conn} is a holomorphic torsor
  under the cotangent bundle of $\Bun^{d, \regstab}( X, G)$.
  As such, it is isomorphic to the torsor of
  holomorphic connections on the line bundle
  \begin{equation*}
    \calL \longto \Bun^{d, \regstab}( X, G)
  \end{equation*}
  with fibers $\det \rmH^1(X, \ad(E))$; see \cite[Lemma IV.4]{Fa}.
  Since $\calL$ is ample, its first Chern class is nonzero,
  so $\calL$ admits no global holomorphic connections.
\end{proof}
Let $\Hod^{d, \regstab}( X, G) \subseteq \Hod^d( X, G)$ denote
the open locus of triples $( \lambda, E, D)$ for which $E$ is
regularly stable. The forgetful maps in \eqref{eq:forget_higgs}
and \eqref{eq:forget_conn} extend to the forgetful map
\begin{equation} \label{eq:forget_hodge}
  \Hod^{d, \regstab}( X, G) \longto \Bun^{d, \regstab}( X, G),
    \qquad ( \lambda, E, D) \longmapsto E,
\end{equation}
which is an algebraic vector bundle. It contains
the cotangent bundle \eqref{eq:forget_higgs} as a subbundle;
the quotient is a line bundle, which is trivialized by
the projection $\pr$ in \eqref{eq:pr_hodge}.
\begin{corollary} \label{cor:conn}
  The vector bundle \eqref{eq:forget_hodge} has no
  nonzero holomorphic sections.
\end{corollary}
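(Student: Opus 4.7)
The plan is to use the short exact sequence of vector bundles on $\Bun^{d, \regstab}(X, G)$ indicated in the paragraph preceding the corollary:
\[
  0 \longto T^* \Bun^{d, \regstab}( X, G) \longto \Hod^{d, \regstab}( X, G) \longto \calO \longto 0,
\]
where the subbundle is the cotangent bundle \eqref{eq:forget_higgs}, and the trivialization of the quotient line bundle is induced by the projection $\pr$ of \eqref{eq:pr_hodge}. Given a holomorphic section $s$ of $\Hod^{d, \regstab}( X, G) \longto \Bun^{d, \regstab}( X, G)$, I would first consider the holomorphic function $\pr \circ s$ on $\Bun^{d, \regstab}(X, G)$. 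The proof of Proposition \ref{prop:forms} already observed that any holomorphic function on $\Bun^{d, \regstab}(X, G)$ is constant, so $\pr \circ s \equiv \lambda_0$ for some $\lambda_0 \in \CC$.

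I would then split the argument according to the value of $\lambda_0$. If $\lambda_0 = 0$, the section $s$ factors through the subbundle $T^* \Bun^{d, \regstab}(X, G) = \Higgs^{d, \regstab}(X, G)$, so it is a holomorphic $1$-form on $\Bun^{d, \regstab}(X, G)$; then Proposition \ref{prop:forms} forces $s = 0$. If $\lambda_0 \neq 0$, write $s(E) = (\lambda_0, E, D( E))$ for every regularly stable $E$; then $E \longmapsto ( E, \lambda_0^{-1} D( E))$ is a holomorphic section of the forgetful map \eqref{eq:forget_conn}, because $\lambda_0^{-1} D( E)$ is a $1$-connection on $E$. This directly contradicts Proposition \ref{prop:conn}, and hence this case cannot occur.

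I do not anticipate a real obstacle: all the substantive work is already contained in Propositions \ref{prop:fields}, \ref{prop:forms} and \ref{prop:conn}, and the corollary is essentially a bookkeeping statement assembling those inputs through the two-step filtration of $\Hod^{d, \regstab}( X, G)$. The only subtle point is the constancy of $\pr \circ s$, but this is precisely the fact that was invoked at the end of the proof of Proposition \ref{prop:forms}, so no new argument is needed.
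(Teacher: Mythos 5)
Your proof is correct and is essentially the paper's own argument: the paper likewise notes that $\pr\circ s$ is a constant, kills a nonzero constant via Proposition \ref{prop:conn} (your explicit rescaling $D \mapsto \lambda_0^{-1}D$ is exactly the implicit step there), and then concludes $s=0$ from Proposition \ref{prop:forms} once $s$ lands in the cotangent subbundle. No differences worth noting.
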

\begin{proof}
  Let $s$ be a holomorphic section of the vector bundle
  \eqref{eq:forget_hodge}. Then $\pr \circ s$
  is a holomorphic function on $\Bun^{d, \regstab}( X, G)$,
  and hence constant. This constant vanishes because of
  Proposition \ref{prop:conn}. So $\pr \circ s = 0$, which
  implies that $s = 0$ using Proposition \ref{prop:forms}.
\end{proof}
\begin{corollary} \label{cor:hodge}
  The restriction of the algebraic tangent bundle
  \begin{equation*}
    T \Hod^{d, \regstab}( X, G) \longto \Hod^{d, \regstab}( X, G)
  \end{equation*}
  to the subvariety $\Bun^{d, \regstab}( X, G) \subseteq
  \Hod^{d, \regstab}( X, G)$ has no nonzero holomorphic sections. 
\end{corollary}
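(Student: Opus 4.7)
The plan is to copy the proof of Corollary \ref{cor:higgs} almost verbatim, with the vector bundle \eqref{eq:forget_higgs} replaced by the larger vector bundle \eqref{eq:forget_hodge}, and Proposition \ref{prop:forms} replaced by Corollary \ref{cor:conn}.

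First I would check that the subvariety $\Bun^{d, \regstab}(X, G) \subseteq \Hod^{d, \regstab}(X, G)$ coincides with the zero section of the vector bundle \eqref{eq:forget_hodge}. Indeed, under the composition of embeddings $E \mapsto (E, 0) \mapsto (0, E, 0)$, both the scalar $\lambda$ and the $\lambda$--connection $D$ vanish, which is the origin of the fiber of \eqref{eq:forget_hodge} over $E$ viewed as a vector space. Hence the splitting \eqref{eq:decomposition} applies and yields
\begin{equation*}
  (T \Hod^{d, \regstab}( X, G))|_{\Bun^{d, \regstab}( X, G)}
    \,\cong\, T \Bun^{d, \regstab}( X, G) \oplus \Hod^{d, \regstab}( X, G)
\end{equation*}
as holomorphic vector bundles over $\Bun^{d, \regstab}(X, G)$, where the second summand denotes the total space of \eqref{eq:forget_hodge}.

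A holomorphic section of the left hand side then decomposes into a pair of holomorphic sections, one in each summand. The $T \Bun^{d, \regstab}(X, G)$--component is a vector field on $\Bun^{d, \regstab}(X, G)$ and therefore vanishes by Proposition \ref{prop:fields}, while the other component is a holomorphic section of \eqref{eq:forget_hodge} and hence vanishes by Corollary \ref{cor:conn}. Combining these gives the statement. The argument is entirely formal once the earlier results are in place; the only point that requires care is the identification of the embedded $\Bun^{d, \regstab}(X, G)$ with the zero section of \eqref{eq:forget_hodge}, and I do not anticipate any real obstacle there.
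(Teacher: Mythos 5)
Your proposal is correct and is essentially identical to the paper's own proof: the paper also identifies $\Bun^{d, \regstab}( X, G)$ with the zero section of the vector bundle \eqref{eq:forget_hodge}, applies the splitting \eqref{eq:decomposition}, and kills the two components using Proposition \ref{prop:fields} and Corollary \ref{cor:conn}.
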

\begin{proof}
  Use the decomposition \eqref{eq:decomposition},
  Proposition \ref{prop:fields}, and Corollary \ref{cor:conn}.
\end{proof}

\section{Torelli theorems} \label{sec:torellis}
Let $X, X'$ be compact connected Riemann surfaces of genus $\geq 3$.
Let $G, G'$ be nontrivial connected semisimple linear algebraic
groups over $\CC$. Fix $d \in \pi_1( G)$ and $d' \in \pi_1( G')$.
\begin{theorem} \label{thm:higgs}
  If $\Higgs^{d'}( X', G')$ is biholomorphic to $\Higgs^d( X, G)$,
  then $X' \cong X$.
\end{theorem}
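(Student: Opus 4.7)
My plan is to recover $\Bun^d(X,G)$ from $\Higgs^d(X,G)$ using only biholomorphism-invariant data, and then to invoke an existing Torelli theorem for principal $G$--bundles.

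Given a biholomorphism $f\colon \Higgs^{d'}(X',G') \to \Higgs^d(X,G)$, comparing dimensions yields $(g-1)\dim_{\CC} G = (g'-1)\dim_{\CC} G'$. The central step is to characterize $\Bun^{d,\regstab}(X,G) \subseteq \Higgs^d(X,G)$ intrinsically. For this I would combine two ingredients already proved. First, Corollary \ref{cor:higgs} asserts that every holomorphic vector field on $\Higgs^{d,\regstab}(X,G)$ vanishes identically on $\Bun^{d,\regstab}(X,G)$. Second, since $\Higgs^{d,\regstab}(X,G)$ is the total space of the cotangent bundle of $\Bun^{d,\regstab}(X,G)$, its Euler vector field is itself a holomorphic vector field whose zero locus is precisely the zero section $\Bun^{d,\regstab}(X,G)$. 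Together these show that $\Bun^{d,\regstab}(X,G)$ equals the common zero locus of all holomorphic vector fields on $\Higgs^{d,\regstab}(X,G)$. The same description holds on the source side, so once the regularly stable Higgs loci are matched under $f$, one obtains an induced biholomorphism $\Bun^{d',\regstab}(X',G') \to \Bun^{d,\regstab}(X,G)$.

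The matching of regularly stable Higgs loci under $f$ is the first place where care is required. I would extract this using Proposition \ref{prop:fixed_higgs}: $\Bun^d(X,G)$ is singled out as the unique $(g-1)\dim_{\CC} G$--dimensional irreducible component of the natural $\CC^*$--fixed locus. After transporting the $\CC^*$--action through $f^{-1}$ and comparing maximum-dimensional fixed components on the source side, one obtains a neighborhood of $\Bun^{d,\regstab}(X,G)$ on which the vanishing-vector-field characterization applies. Once the biholomorphism between the regularly stable principal bundle moduli is in hand, Hartogs' theorem (using that $\Bun^d \setminus \Bun^{d,\regstab}$ has codimension at least two in the normal projective variety $\Bun^d$) extends it to a biholomorphism $\Bun^{d'}(X',G') \cong \Bun^d(X,G)$. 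A known Torelli theorem for moduli spaces of semistable principal $G$--bundles on compact Riemann surfaces of genus at least three then forces $X' \cong X$.

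The main obstacle is the intrinsic characterization step: bridging algebraic and holomorphic vector fields, and carefully matching the regularly stable Higgs loci of the two moduli spaces under a biholomorphism that need not respect either the natural $\CC^*$--action or the algebraic structure. Once this matching is in place, the remaining steps are comparatively standard.
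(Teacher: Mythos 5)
The crux of your argument --- matching the relevant loci of the two moduli spaces under the biholomorphism $f$ --- is exactly the step that is not justified. Proposition \ref{prop:fixed_higgs} concerns only the \emph{natural} $\CC^*$--action \eqref{eq:action_higgs}. When you transport the natural action on $\Higgs^d(X,G)$ through $f^{-1}$, you obtain some holomorphic $\CC^*$--action on $\Higgs^{d'}(X',G')$ that has no a priori relation to the natural action there: its fixed locus is $f^{-1}$ of the natural fixed locus on the target, with unique maximal-dimensional component $f^{-1}\bigl(\Bun^d(X,G)\bigr)$, but nothing in your sketch identifies this with $\Bun^{d'}(X',G')$, nor does it produce the claimed ``neighborhood of $\Bun^{d',\regstab}(X',G')$ on which the vanishing-vector-field characterization applies'' --- that characterization lives on the specific open set $\Higgs^{d',\regstab}(X',G')$, and to use it you would first need to know that $f$ respects the regularly stable loci, which is precisely what is in question. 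The missing idea is the one the paper uses: Corollary \ref{cor:higgs} shows that the generating vector field of \emph{any} holomorphic $\CC^*$--action vanishes along $\Bun^{d',\regstab}(X',G')$, hence (the fixed locus being closed and the regularly stable locus being dense) $\Bun^{d'}(X',G')$ is fixed pointwise by every holomorphic $\CC^*$--action, in particular by the transported one. Combined with the dimension count $(g'-1)\dim_{\CC}G'=(g-1)\dim_{\CC}G$ and Proposition \ref{prop:fixed_higgs} applied on the target, this forces $\Bun^{d'}(X',G')=f^{-1}\bigl(\Bun^d(X,G)\bigr)$, since an irreducible analytic subset of maximal dimension inside the irreducible component $f^{-1}\bigl(\Bun^d(X,G)\bigr)$ must coincide with it.

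Once that identification is made, the rest of your machinery becomes superfluous: you already have a biholomorphism between the full moduli spaces $\Bun^{d'}(X',G')$ and $\Bun^d(X,G)$, so the Euler-vector-field characterization of the zero section, the description of $\Bun^{d,\regstab}(X,G)$ as the common zero locus of all holomorphic vector fields on $\Higgs^{d,\regstab}(X,G)$, and the Hartogs extension from the regularly stable loci are all detours. (The Hartogs step would itself need care: extending a biholomorphism between open subsets with codimension-two complements to the normal projective closures is not a direct application of Hartogs' theorem for functions.) Your final appeal to a Torelli theorem for the moduli of principal bundles is the same as the paper's use of \cite{BH}.
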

\begin{proof}
  Corollary \ref{cor:higgs} implies that the subvariety
  $\Bun^d( X, G)$ is fixed pointwise by every holomorphic
  $\CC^*$--action on $\Higgs^d( X, G)$. All other complex analytic
  subvarieties with that property have smaller dimension, due to
  Proposition \ref{prop:fixed_higgs}. Thus we get a biholomorphic
  map from $\Bun^{d'}( X', G')$ to $\Bun^d( X, G)$ by restriction.
  Using \cite{BH}, this implies that $X' \,\cong\, X$.
\end{proof}
\begin{theorem} \label{thm:hodge}
  If $\Hod^{d'}( X', G')$ is biholomorphic to $\Hod^d( X, G)$,
  then $X' \cong X$.
\end{theorem}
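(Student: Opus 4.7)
The plan is to mimic the proof of Theorem~\ref{thm:higgs}, replacing Corollary~\ref{cor:higgs} and Proposition~\ref{prop:fixed_higgs} by their $\Hod$--level analogues Corollary~\ref{cor:hodge} and Corollary~\ref{cor:fixed_hodge}. First I would observe that any holomorphic $\CC^*$--action on $\Hod^d(X,G)$ has a holomorphic fundamental vector field, whose restriction to the open locus $\Bun^{d,\regstab}(X,G)$ gives a holomorphic section of $T\Hod^{d,\regstab}(X,G)|_{\Bun^{d,\regstab}(X,G)}$; by Corollary~\ref{cor:hodge} this section is zero. Since $\CC^*$ is connected, the vanishing of the fundamental vector field at a point forces that point to be fixed by the whole group, so $\Bun^{d,\regstab}(X,G)$ lies in the fixed point locus. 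Taking closures shows the same for $\Bun^d(X,G)$. So: for \emph{any} holomorphic $\CC^*$--action on $\Hod^d(X,G)$, the closed subvariety $\Bun^d(X,G)$ is pointwise fixed, and the same holds verbatim for $\Hod^{d'}(X',G')$ and $\Bun^{d'}(X',G')$.

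Let $\Phi\colon \Hod^{d'}(X',G') \to \Hod^d(X,G)$ be the given biholomorphism, and transport the natural $\CC^*$--action via $\Phi$ to a holomorphic $\CC^*$--action on $\Hod^{d'}(X',G')$. By the observation just made, $\Bun^{d'}(X',G')$ is pointwise fixed by this transported action, so $\Phi(\Bun^{d'}(X',G'))$ is pointwise fixed by the natural $\CC^*$--action on $\Hod^d(X,G)$. It is irreducible of dimension $(g-1)\cdot \dim_{\CC} G'$, so by Corollary~\ref{cor:fixed_hodge} one has $(g-1)\dim_{\CC} G' \leq (g-1)\dim_{\CC} G$. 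Running the same argument with $\Phi^{-1}$ in place of $\Phi$ yields the opposite inequality, hence $\dim_{\CC} G = \dim_{\CC} G'$. The equality clause in Corollary~\ref{cor:fixed_hodge} then forces $\Phi(\Bun^{d'}(X',G')) = \Bun^d(X,G)$.

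Restricting $\Phi$ therefore produces a biholomorphism $\Bun^{d'}(X',G') \xrightarrow{\sim} \Bun^d(X,G)$, and the Torelli theorem for moduli of principal bundles \cite{BH} finishes the proof: $X' \cong X$. The only step that requires any thought beyond translating the proof of Theorem~\ref{thm:higgs} is the dimension bookkeeping in the paragraph above, which is needed to rule out the \emph{a priori} possibility that $\Phi(\Bun^{d'})$ is strictly contained in a larger fixed component of the $\CC^*$--action on $\Hod^d(X,G)$; this is handled by the equality case of Corollary~\ref{cor:fixed_hodge} together with the symmetric use of $\Phi^{-1}$.
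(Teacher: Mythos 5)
Your proposal is correct and takes essentially the same route as the paper: the printed proof of Theorem~\ref{thm:hodge} simply says to repeat the proof of Theorem~\ref{thm:higgs} with Corollary~\ref{cor:higgs} replaced by Corollary~\ref{cor:hodge} and Proposition~\ref{prop:fixed_higgs} by Corollary~\ref{cor:fixed_hodge}, and your fundamental-vector-field argument and dimension bookkeeping just make explicit what that proof leaves implicit. One small correction: since $X$ and $X'$ may have different genera $g$ and $g'$, the image $\Phi(\Bun^{d'}(X',G'))$ has dimension $(g'-1)\cdot\dim_{\CC}G'$, so the two inequalities give $(g'-1)\dim_{\CC}G'=(g-1)\dim_{\CC}G$ rather than $\dim_{\CC}G=\dim_{\CC}G'$; this does not affect the equality-case argument.
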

\begin{proof}
  The argument is exactly the same as in the previous proof.
  It suffices to replace Corollary \ref{cor:higgs} by Corollary
  \ref{cor:hodge}, and Proposition \ref{prop:fixed_higgs}
  by Corollary \ref{cor:fixed_hodge}.
\end{proof}
\begin{theorem} \label{thm:MDH}
  If $\MDH^{d'}( X', G')$ is biholomorphic to $\MDH^d( X, G)$,
  then $X' \cong X$ or $X' \cong \Xbar$.
\end{theorem}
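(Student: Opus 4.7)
The plan is to characterize the pair $\{\Bun^d(X, G), \Bun^{-d}(\Xbar, G)\}$ intrinsically inside $\MDH^d(X, G)$ by means of holomorphic $\CC^*$-actions, and then to reduce to the Torelli theorem of \cite{BH} for moduli of principal bundles, mirroring the strategy used for Theorem~\ref{thm:higgs} and Theorem~\ref{thm:hodge} but with two marked components instead of one.

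First I would show that any holomorphic $\CC^*$-action on $\MDH^d(X, G)$ fixes each of $\Bun^d(X, G)$ and $\Bun^{-d}(\Xbar, G)$ pointwise. The fundamental vector field of such an action, restricted to the regularly stable locus $\Bun^{d, \regstab}(X, G)$, is a holomorphic section of $T\MDH^d(X, G)|_{\Bun^{d, \regstab}(X, G)}$. Since $\Hod^{d, \regstab}(X, G)$ is open in $\MDH^d(X, G)$, this restricted tangent bundle coincides with $T\Hod^{d, \regstab}(X, G)|_{\Bun^{d, \regstab}(X, G)}$, which by Corollary~\ref{cor:hodge} has no nonzero holomorphic sections. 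Hence the fundamental vector field vanishes on $\Bun^{d, \regstab}(X, G)$; by density of the regularly stable locus in $\Bun^d(X, G)$ and closedness of the fixed-point locus, all of $\Bun^d(X, G)$ is fixed. The same argument, applied on the other open patch $\Hod^{-d}(\Xbar, G) \subset \MDH^d(X, G)$, shows that $\Bun^{-d}(\Xbar, G)$ is pointwise fixed as well.

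Combined with Corollary~\ref{cor:fixed_MDH}, this provides an intrinsic characterization: for every holomorphic $\CC^*$-action on $\MDH^d(X, G)$, the subvarieties $\Bun^d(X, G)$ and $\Bun^{-d}(\Xbar, G)$ are exactly the irreducible components of $\MDH^d(X, G)^{\CC^*}$ of the maximal dimension $(g-1) \cdot \dim_{\CC} G$. These components are disjoint, since $\Bun^d(X, G)$ sits in the fiber over $0$ and $\Bun^{-d}(\Xbar, G)$ in the fiber over $\infty$ of the canonical map $\MDH^d(X, G) \longonto \CC\PP^1$. Now, given a biholomorphism $\Phi \colon \MDH^{d'}(X', G') \longto \MDH^d(X, G)$, push the natural $\CC^*$-action on the source forward through $\Phi$ to obtain a holomorphic $\CC^*$-action on the target. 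The intrinsic characterization, applied on both sides, forces $\Phi$ to send the disjoint union $\Bun^{d'}(X', G') \sqcup \Bun^{-d'}(\Xbar', G')$ biholomorphically onto $\Bun^d(X, G) \sqcup \Bun^{-d}(\Xbar, G)$. Since each summand is irreducible, $\Phi$ restricts to a biholomorphism from $\Bun^{d'}(X', G')$ to one of $\Bun^d(X, G)$ or $\Bun^{-d}(\Xbar, G)$, and the Torelli theorem of \cite{BH} yields $X' \cong X$ or $X' \cong \Xbar$ respectively.

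The main obstacle I anticipate is the first step, where one must upgrade Corollary~\ref{cor:hodge} to a statement about the ambient $\MDH$-tangent bundle and about \emph{holomorphic} (rather than algebraic) $\CC^*$-actions. This hinges on the openness of $\Hod^{d, \regstab}(X, G)$ inside $\MDH^d(X, G)$, on the density of $\Bun^{d, \regstab}(X, G)$ in $\Bun^d(X, G)$, and on the separation of the two Bun-components in distinct fibers of the projection to $\CC\PP^1$, which prevents them from being exchanged or merged within a single $\MDH^d$. Once these points are in place, the remainder of the argument is essentially a direct transcription of the Higgs and Hodge cases, with the single top-dimensional fixed component replaced by the disjoint pair supplied by Corollary~\ref{cor:fixed_MDH}.
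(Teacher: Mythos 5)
Your proposal is correct and takes essentially the same route as the paper: Corollary \ref{cor:hodge} shows that $\Bun^d(X,G)$ and $\Bun^{-d}(\Xbar,G)$ are fixed pointwise by every holomorphic $\CC^*$--action, Corollary \ref{cor:fixed_MDH} shows every other pointwise-fixed subvariety has smaller dimension, and restricting the biholomorphism reduces everything to the Torelli theorem of \cite{BH}. Just note that your intermediate claim as literally stated (that for \emph{every} holomorphic $\CC^*$--action the two subvarieties are exactly the maximal-dimensional components of its fixed locus) fails for, say, the trivial action; what you actually use --- the natural action transported through $\Phi$ together with Corollary \ref{cor:fixed_MDH} applied to the natural action on each side --- is the correct formulation and matches the paper's argument.
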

\begin{proof}
  The argument is similar. Corollary \ref{cor:hodge} implies that
  the two subvarieties $\Bun^d( X, G)$ and $\Bun^{-d}( \Xbar, G)$
  are fixed pointwise by every holomorphic $\CC^*$--action on
  $\MDH^d( X, G)$. All other complex analytic subvarieties with
  that property have  smaller dimension, due to Corollary 
  \ref{cor:fixed_MDH}. Thus we get a biholomorphic map from
  $\Bun^{d'}( X', G')$ to either $\Bun^d( X, G)$ or
  $\Bun^{-d}( \Xbar, G)$ by restriction. Using \cite{BH},
  this implies that either $X'\,\cong\, X$ or $X' \,\cong\,\Xbar$.
\end{proof}

\end{document}